\title[symmetric products and SODs]
{symmetric products of dg categories and semi-orthogonal decompositions}
\author{Naoki Koseki}
\date{}
\address{The University of Liverpool, Mathematical Sciences Building, Liverpool, L69 7ZL, UK.}
\email{koseki@liverpool.ac.uk}
\theoremstyle{plain}
\newtheorem{thm}{Theorem}[section]
\newtheorem{prop}[thm]{Proposition}
\newtheorem{def-prop}[thm]{Definition-Proposition}
\newtheorem{lem}[thm]{Lemma}
\newtheorem{cor}[thm]{Corollary}
\newtheorem*{thm*}{Theorem}
\theoremstyle{definition}
\newtheorem{defin}[thm]{Definition}
\newtheorem*{NaC}{Notation and Convention}
\newtheorem*{ACK}{Acknowledgement}
\theoremstyle{remark}
\newtheorem{rmk}[thm]{Remark}
\newtheorem{ex}[thm]{Example}
\DeclareMathOperator{\id}{id}
\newcommand{\bP}{\mathbb{P}}
\newcommand{\bC}{\mathbb{C}}
\newcommand{\bZ}{\mathbb{Z}}
\newcommand{\bK}{\mathbb{K}}
\newcommand{\mcA}{\mathcal{A}}
\newcommand{\mcB}{\mathcal{B}}
\newcommand{\mcC}{\mathcal{C}}
\newcommand{\mcD}{\mathcal{D}}
\newcommand{\mcO}{\mathcal{O}}
\newcommand{\mcT}{\mathcal{T}}
\DeclareMathOperator{\Hilb}{Hilb}
\DeclareMathOperator{\Sym}{Sym}
\DeclareMathOperator{\pt}{pt}
\DeclareMathOperator{\pr}{pr}
\DeclareMathOperator{\op}{op}
\DeclareMathOperator{\Ind}{Ind}
\DeclareMathOperator{\Res}{Res}
\DeclareMathOperator{\pre-tr}{pre-tr}
\DeclareMathOperator{\HH}{HH}
\DeclareMathOperator{\Perf}{Perf}
\DeclareMathOperator{\Mod}{Mod}
\newcommand{\hatS}{\hat{S}}
\newcommand{\fS}{{\mathfrak S}}
\begin{document}
\maketitle

\begin{abstract}
In this article, we investigate semi-orthogonal decompositions of 
the symmetric products of dg-enhanced triangulated categories. 
Given a semi-orthogonal decomposition $\mcD=\langle \mcA, \mcB \rangle$, 
we construct semi-orthogonal decompositions of the symmetric products of $\mcD$ 
in terms of that of $\mcA$ and $\mcB$. 
This was originally stated  by Galkin--Shinder, and 
answers the question raised by Ganter--Kapranov. 

Combining the above result with the derived McKay correspondence, 
we obtain various interesting semi-orthogonal decompositions 
of the derived categories of the Hilbert schemes of points on surfaces. 
\end{abstract}

\setcounter{tocdepth}{1}
\tableofcontents

\section{Introduction}
\subsection{Motivation and Results}
For a given (dg-enhanced) triangulated category $\mcD$, 
Ganter--Kapranov \cite{gk14} defined its symmetric products $\Sym^n\mcD$ 
for all non-negative integer $n \geq 0$. 
This notion of symmetric products of dg categories provides interesting subjects 
in representation theory. Namely, Gyenge--Koppensteiner--Logvinenko \cite{gkl21} 
constructed a {\it categorical Heisenberg action} on 
$\oplus_{n \geq 0} \Sym^n\mcD$. 
Their result generalizes and unifies various previous results in the literature, e.g., 
\cite{cl12, gro96, kru18, nak97}. 

When $\mcD$ is 
the bounded derived category $D^b(S)$ of coherent sheaves 
on a smooth projective surface $S$, we have 
\begin{equation} \label{eq:introSymHilb}
\Sym^nD^b(S) \cong D^b([S^{\times n}/\fS_n]) \cong D^b(\Hilb^n(S)), 
\end{equation}
where the second equivalence is highly non-trivial 
and follows from the derived McKay correspondence \cite{bkr01, hai01}. 
In this special case, the construction of \cite{gkl21} categorifies the famous Heisenberg action 
on the cohomology groups of the Hilbert schemes of points due to 
Grojnowksi \cite{gro96} and Nakajima \cite{nak97}. 

The goal of the present paper is to understand 
the interaction between the notion of symmetric products and 
one of the most fundamental concepts in the dg category theory; 
{\it semi-orthogonal decompositions}. 
Semi-orthogonal decompositions of dg categories appear in various settings, 
especially in algebraic geometry 
\cite{bo95, kaw02, kuz14}. 

The following is the main theorem in this paper: 
\begin{thm}[Theorem \ref{thm:SymSOD}] \label{thm:intromain}
Let $\mcD$ be a dg-enhanced triangulated category. 
Suppose that we have a semi-orthogonal decomposition 
$\mcD=\langle \mcA, \mcB \rangle$. 

Then we have a semi-orthogonal decomposition 
\[
\Sym^n\mcD=\left\langle
\Sym^{n-i} \mcA \bullet \Sym^i\mcB \colon i=0, \cdots, n
\right\rangle
\]
for each $n \in \bZ_{>0}$. 
\end{thm}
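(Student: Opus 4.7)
The plan is to refine the given SOD to an $\fS_n$-equivariant SOD on the tensor power $\mcD^{\otimes n}$, and then descend it to $\Sym^n\mcD$ by passing to $\fS_n$-invariants orbit by orbit. First, iterating $\mcD = \langle \mcA, \mcB\rangle$ in each tensor factor should yield
\[
\mcD^{\otimes n} = \langle \mcC_w : w \in \{A,B\}^n \rangle, \qquad \mcC_w := \mcC_{w_1}\otimes\cdots\otimes\mcC_{w_n},
\]
with $\mcC_A := \mcA$, $\mcC_B := \mcB$. I would order the pieces first by the B-weight $|w|_B$, then arbitrarily within each level. If $|w|_B < |w'|_B$, some position $j$ has $w_j = A$, $w'_j = B$, so $\Hom(\mcC_{w'}, \mcC_w)$ contains a factor $\Hom(\mcB, \mcA) = 0$. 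Within a fixed level, distinct $w \neq w'$ differ in at least two positions and the differences contribute both a factor $\Hom(\mcA, \mcB) = 0$ and a factor $\Hom(\mcB, \mcA) = 0$ to either $\Hom$, proving mutual orthogonality. Hence the displayed ordering is a genuine SOD, and the block $\mcT_i := \bigoplus_{|w|_B = i}\mcC_w$ is independent of the refinement chosen inside each level.

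Next I would exploit $\fS_n$-equivariance. Permuting tensor factors permutes the $\mcC_w$ via the natural action on words; since this preserves B-weight, each block $\mcT_i$ is $\fS_n$-stable and forms a single transitive orbit with stabilizer $\fS_{n-i}\times\fS_i$ at the representative $w^{(i)} = (A,\ldots,A,B,\ldots,B)$. Taking $\fS_n$-invariants in the dg-homotopy sense defining Ganter--Kapranov's $\Sym^n$, a Mackey-type identification of global invariants over a transitive $\fS_n$-set with invariants at the stabilizer of a representative should give
\[
\mcT_i^{\fS_n} \simeq (\mcA^{\otimes(n-i)}\otimes \mcB^{\otimes i})^{\fS_{n-i}\times\fS_i} \simeq \Sym^{n-i}\mcA \bullet \Sym^i\mcB,
\]
and the required semi-orthogonality in $\Sym^n\mcD$ descends for free, since the relevant $\Hom$-complexes already vanished before passing to invariants. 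Generation of $\Sym^n\mcD$ by these pieces follows from generation of $\mcD^{\otimes n}$ by the $\mcC_w$.

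The hard part will be making the descent step rigorous in the dg setting. I expect to need two technical ingredients, either drawn from the literature or established earlier in the paper: first, a principle that $\Sym^n$ applied to an $\fS_n$-equivariant SOD with $\fS_n$-stable blocks produces an SOD of the invariants with pieces equal to the block-invariants; second, the Mackey-type identification $\mcT_i^{\fS_n} \simeq \Sym^{n-i}\mcA \bullet \Sym^i\mcB$, which compares ``global'' invariants over an orbit with ``local'' invariants at a point stabilizer and is in effect what the notation $\bullet$ is designed to encode. Granting these ingredients, the theorem reduces to the elementary combinatorics on words carried out above.
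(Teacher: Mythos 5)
Your proposal follows essentially the same route as the paper: decompose $\mcD^{\bullet n}$ into blocks indexed by $B$-weight whose constituents within a fixed weight are completely orthogonal (the same two-position word argument), observe the blocks are $\fS_n$-stable, descend by Elagin's theorem on equivariant semi-orthogonal decompositions, and identify the invariants of each transitive block with the invariants at the stabilizer $\fS_{n-i}\times\fS_i$ (the paper's Lemma \ref{lem:symAB} together with Lemma \ref{lem:H0perfG}). The two ``technical ingredients'' you defer are precisely the ones the paper supplies, so the argument is correct and structurally identical.
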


\begin{rmk}
The above semi-orthogonal decomposition first appeared in \cite[Equation (3)]{gs15} 
without a proof. 
\end{rmk}

In particular, we have: 
\begin{cor}[Corollary \ref{cor:fullexcep}]
\label{cor:intro}
Let $\mcD$ be a dg-enhanced triangulated category. Suppose that $\mcD$ has a full exceptional collection of length $l$. 
Then $\Sym^n\mcD$ has a full exceptional collection of length 
\begin{equation*} 
q(n; l) \coloneqq \sum_{\substack{i_1+\cdots+i_l=n \\ i_1, \cdots, i_l \geq 0}}
p(i_1) \cdot \cdots \cdot p(i_l). 
\end{equation*}
\end{cor}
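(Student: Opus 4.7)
The plan is to iterate Theorem \ref{thm:intromain} on the length of the exceptional collection, reducing everything to the case $\mcD = D^b(\pt)$, which one analyses directly.

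First, I would unwind the hypothesis. A full exceptional collection $E_1, \ldots, E_l$ on $\mcD$ yields an iterated semi-orthogonal decomposition
\[
\mcD = \langle \mcA_{l-1}, \langle E_l\rangle \rangle, \qquad \mcA_{l-1} = \langle E_1, \ldots, E_{l-1}\rangle,
\]
where each piece $\langle E_k\rangle$ is equivalent to $D^b(\pt)$. Applying Theorem \ref{thm:intromain} to the outer decomposition $\mcD = \langle \mcA_{l-1}, \langle E_l\rangle\rangle$ gives
\[
\Sym^n \mcD = \left\langle \Sym^{n-i}\mcA_{l-1} \bullet \Sym^i \langle E_l\rangle : i = 0, \ldots, n \right\rangle.
\]
So by induction on $l$, it suffices to prove two auxiliary facts: (a) $\Sym^i D^b(\pt)$ carries a full exceptional collection of length $p(i)$; and (b) if $\mcA$ and $\mcB$ admit full exceptional collections of lengths $a$ and $b$ respectively, then $\mcA \bullet \mcB$ admits one of length $a\cdot b$.

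For (a), I would appeal to the identification (used implicitly in the paper via \cite{gk14}) between $\Sym^i D^b(\pt)$ and a suitable derived category of $\fS_i$-representations: its irreducibles are indexed by partitions of $i$, hence form a full exceptional collection of length $p(i)$. For (b), the $\bullet$ operation is built so that objects of $\mcA \bullet \mcB$ are generated by tensor-type products of objects of $\mcA$ and $\mcB$, and semi-orthogonality of products of exceptional collections is a standard bookkeeping check on $\Ext$-groups. I would either cite the exact lemma used in the proof of Theorem \ref{thm:intromain} or verify it directly from the definition of $\bullet$.

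Granting (a) and (b), the inductive step yields
\[
\# \text{ generators of } \Sym^n\mcD = \sum_{i=0}^n q(n-i; l-1)\cdot p(i),
\]
and a direct manipulation of the generating function shows this equals $q(n;l)$:
\[
\sum_{i=0}^n \Bigl(\!\!\!\!\sum_{\substack{i_1+\cdots+i_{l-1}=n-i \\ i_j\geq 0}} p(i_1)\cdots p(i_{l-1})\Bigr) p(i) = \!\!\!\!\sum_{\substack{i_1+\cdots+i_l=n \\ i_j\geq 0}} p(i_1)\cdots p(i_l).
\]
The base case $l=1$ is precisely statement (a).

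The main obstacle I anticipate is step (b): the $\bullet$ operation is defined via a non-trivial tensor construction on dg categories, so rather than manipulating exceptional objects on the nose, I expect the cleanest route is to show that $\bullet$ is compatible with semi-orthogonal decompositions in both variables (so that $\langle E\rangle \bullet \langle F\rangle \simeq D^b(\pt)$), and then assemble the product collection by repeated application. Checking that the resulting ordering is semi-orthogonal — not just that the individual pieces are exceptional — is the delicate point and likely the one place where the explicit construction of $\bullet$ in the paper must be invoked.
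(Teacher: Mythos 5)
Your proposal is correct and follows essentially the same route as the paper: induct on $l$ by peeling off one exceptional object, apply Theorem \ref{thm:intromain} to the resulting two-term decomposition, handle the base case $\Sym^i D^b(\pt)$ via the $p(i)$ irreducible representations of $\fS_i$, reduce the $\bullet$-product step to compatibility of $\bullet$ with semi-orthogonal decompositions (Proposition \ref{prop:bulletSOD}), and conclude with the convolution identity $\sum_i p(n-i)\,q(i;l-1)=q(n;l)$. The point you flag as delicate in step (b) is exactly what the paper's citation of \cite[Proposition 4.6]{bll04} supplies, so no further work is needed.
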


In the above theorem, $(-) \bullet (-)$ is a version of a tensor product for dg categories 
introduced by \cite{bll04}, see Section \ref{sec:bullet} for more detail. 
We can think of this result as a natural generalization of 
the following direct sum decomposition for vector spaces $V, W$: 
\[
\Sym^n(V \oplus W)=\oplus_{i=0}^n \Sym^{n-i}V \otimes \Sym^iW. 
\]
This would be a satisfying answer to the question raised by Ganter--Kapranov 
\cite[Question 7.2.1]{gk14}. 
The key ingredient of the proof is 
Elagin's descent theory for semi-orthogonal decompositions, see \cite{ela12, shi18}. 

The most interesting case for us is the case of $\mcD=D^b(S)$, 
where $S$ is a smooth projective surface. 
Combined with the isomorphisms (\ref{eq:introSymHilb}), 
Theorem \ref{thm:intromain} provides a useful way 
to construct various interesting semi-orthogonal decompositions on $D^b(\Hilb^n(S))$. 
In Section \ref{sec:Hilb}, we consider the following cases: 
\begin{enumerate}
\item When $S=\bP^2$ or more generally a toric surface. 
In this case, $D^b(S)$ has a full exceptional collection. 
By Corollary \ref{cor:intro}, 
the same holds for 
$D^b(\Hilb^n(S))$ for all $n \geq 1$. 

\item When $D^b(S)$ has a semi-orthogonal collection consisting of exceptional objects and a category $\mcA$ whose Hochschild homology vanishes. 
Such a category $\mcA$ is called a {\it quasi-phantom}. 
See e.g. \cite{ao13, bgks15, bgs13, kkl17, gkms15, gs13} 
for examples of such surfaces. 
Using Theorem \ref{thm:intromain}, we will show that 
$\Sym^i\mcA \subset D^b(\Hilb^n(S))$ are phantom subcategories for all 
$1 \leq i \leq n$. 

\item When $S \to C$ is a $\bP^1$-bundle over a smooth projective curve $C$. 
In this case, we have $D^b(S)=\langle D^b(C), D^b(C) \rangle$. 
Combining Theorem \ref{thm:intromain} with some other results \cite{pvdb19, tod21d}, 
we prove that $D^b(\Hilb^n(S))$ has a semi-orthogonal decomposition 
whose components are derived categories of the products of 
the Jacobian $J(C)$ and $\Sym^iC$ for $0 \leq i \leq \min\{n, g(C)-1\}$. 

\item When $\hatS$ is the blow up of $S$ at a point. 
Then we have a semi-orthogonal decomposition 
$D^b(\hatS)=\langle D^b(S), D^b(\pt) \rangle$. 
By Theorem \ref{thm:intromain}, $D^b(\Hilb^n(\hatS))$ has a semi-orthogonal decomposition 
in terms of $D^b(\Hilb^i(S))$ for $0 \leq i \leq n$. 
\end{enumerate}

\subsection{Relation with existing works}
\begin{enumerate}
\item As mentioned above, Theorem \ref{thm:intromain} already appeared in 
\cite{gs15} without a proof. 
See also \cite[Remark 4.7]{ks15} for a special case but without using the notion 
of symmetric products of dg categories. 

However, the author could not find rigorous proof in the existing works, 
so decided to write the present paper. 

\item Semi-orthogonal decompositions of $D^b(\Hilb^n(S))$ 
have already been constructed for several algebraic surfaces $S$. 
For example, the semi-orthogonal decomposition for the derived category 
$D^b(\Hilb^n(\hatS))$, 
where $\hatS$ is the blow-up of a smooth projective surface at a point, 
was constructed in the author's recent paper \cite{kos21b} via a completely different method. 

The fact that $D^b(\Hilb^n(\bP^2))$ has a full-exceptional collection 
is obtained in \cite[Proposition 1.3]{ks15}. 
\end{enumerate}

\subsection{Plan of the paper}
In Section \ref{sec:prelim}, we recall some basic facts about 
dg categories and equivariant categories. 
In Section \ref{sec:main}, we prove Theorem \ref{thm:intromain}. 
In Section \ref{sec:Hilb}, we treat various examples of symmetric products of the derived categories on smooth projective varieties.

\begin{ACK}
The author would like to thank Professors Arend Bayer and Yukinobu Toda 
for fruitful discussions. 
He would also like to thank Professors Andreas Krug and Evgeny Shinder for helpful conversations. 
This work was supported by ERC Consolidator grant WallCrossAG, no.~819864.

Finally, the author would like to thank referees for their careful reading of this paper and pointing out several errors in the previous version. 
\end{ACK}

\begin{NaC}
Throughout the paper, we work over a field $\bK$. 
We use the following notations: 
\begin{itemize}
\item For a smooth projective variety $X$, $D^b(X)$ denotes 
the bounded derived category of coherent sheaves on $X$. 
\item For a dg category $\mcD$, we denote its homotopy category by $H^0(\mcD)$. 
\end{itemize}
\end{NaC}

\section{Preliminaries} \label{sec:prelim}
\subsection{Dg categories} \label{sec:dg}
In this section, we fix notation from dg category theory. 
For details, we refer to the papers \cite{bk91, kel06}. 

\subsubsection{Pre-triangulated dg categories}
Given a dg category $\mcD$, there is a canonically defined dg category 
$\mcD^{\pre-tr}$ with an embedding $\mcD \hookrightarrow \mcD^{\pre-tr}$ 
such that the homotopy category $H^0(\mcD^{\pre-tr})$ is a triangulated category \cite{bk91}. 
The category $\mcD^{\pre-tr}$ is called the {\it pre-triangulated hull} of $\mcD$. 
We say that a dg category $\mcD$ is {\it pre-triangulated} if the induced functor 
$H^0(\mcD) \hookrightarrow H^0(\mcD^{\pre-tr})$ is an equivalence. 

A {\it dg enhancement} of a triangulated category $\mcT$ is a pair of 
pre-triangulated dg category $\mcD$ and an equivalence $H^0(\mcD) \cong \mcT$. 
If a triangulated category $\mcT$ has a dg enhancement, 
we call $\mcT$ a {\it dg-enhanced} triangulated category. 

For a smooth projective variety $X$, 
its derived category $D^b(X)$ has a standard dg enhancement $I(X)$, 
the dg category of complexes of bounded below injective $\mcO_X$-modules with bounded coherent cohomology.

\subsubsection{$\bullet$-product} \label{sec:bullet}
For a dg category $\mcD$, 
$\mcD^{\op}\mathchar`-\Mod$ denotes the dg category of right $\mcD$-modules, 
and $\Perf\mathchar`-\mcD$ denotes the dg category of perfect modules. 
We have the following embeddings: 
\[
\mcD \subset \Perf\mathchar`-\mcD \subset \mcD^{\op}\mathchar`-\Mod. 
\]
Note that $\Perf\mathchar`-\mcD$ is pre-triangulated. 

\begin{defin}[{\cite[Definition 4.2]{bll04}}]
For dg categories $\mcA$ and $\mcB$, we define the dg category $\mcA \bullet \mcB$ as 
\[
\mcA \bullet \mcB \coloneqq \Perf\mathchar`-(\mcA \otimes \mcB). 
\]
\end{defin}

The following is the case we are mostly interested in: 
\begin{thm}[{\cite[Theorem 5.5]{bll04}}]
Let $X, Y$ be smooth projective varieties. 
Let $I(X), I(Y)$ be 
the standard dg enhancements of $D^b(X), D^b(Y)$, respectively. 
Then we have an equivalence 
\[
H^0(I(X) \bullet I(Y)) \cong D^b(X \times Y). 
\]
\end{thm}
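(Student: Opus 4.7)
The plan is to construct a dg functor from $I(X) \otimes I(Y)$ into a dg enhancement of $D^b(X \times Y)$ via external tensor product, extend it canonically to $\Perf\mathchar`-(I(X) \otimes I(Y))$, and then verify the extension is fully faithful and essentially surjective on homotopy categories.

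First I would construct a dg functor $\Phi \colon I(X) \otimes I(Y) \to I(X \times Y)$ sending a pair $(F, G)$ to the external tensor product $F \boxtimes G = \pr_1^* F \otimes_{\mcO_{X \times Y}} \pr_2^* G$, defined on morphisms by the evident pairing. Since injective resolutions on $X$ and $Y$ need not tensor to injective resolutions on $X \times Y$, one should first replace the target by an appropriate h-flat or K-injective dg model and then apply the injective (or K-injective) replacement functor; this is a standard manipulation at the dg level, but it is where one must be careful.

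Next, because $I(X \times Y)$ is pre-triangulated and the notion of perfect module is the free pre-triangulated, idempotent-complete completion of a dg category, the functor $\Phi$ extends (by the universal property encapsulated in the dg Yoneda embedding) to a dg functor
\[
\widetilde{\Phi} \colon \Perf\mathchar`-(I(X) \otimes I(Y)) \longrightarrow I(X \times Y).
\]
Fully faithfulness on representable objects is exactly the Künneth formula
\[
\RHom_{X \times Y}(F \boxtimes G, F' \boxtimes G') \simeq \RHom_X(F, F') \otimes^{\dL}_{\bK} \RHom_Y(G, G'),
\]
which computes the tensor-product Hom on the source side. Since both sides respect cones, shifts, and direct summands, this extends to fully faithfulness of $H^0(\widetilde{\Phi})$ on the whole of $\Perf\mathchar`-(I(X) \otimes I(Y))$.

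The main obstacle is essential surjectivity: one must show that the objects $F \boxtimes G$ classically generate $D^b(X \times Y)$. For smooth projective $X$ and $Y$ with ample line bundles $\mcO_X(1), \mcO_Y(1)$, the product is polarized by $\mcO_X(1) \boxtimes \mcO_Y(1)$, and any coherent sheaf on $X \times Y$ admits a finite locally free resolution by direct sums of line bundles $\mcO_X(-a) \boxtimes \mcO_Y(-b)$; hence every object of $D^b(X \times Y)$ lies in the pre-triangulated, idempotent-complete closure of such external products, which is precisely the essential image of $H^0(\widetilde{\Phi})$. Combined with fully faithfulness, this yields the claimed equivalence $H^0(I(X) \bullet I(Y)) \cong D^b(X \times Y)$.
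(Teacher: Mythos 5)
The paper itself offers no proof of this statement: it is quoted verbatim from Bondal--Larsen--Lunts, so the only argument to compare yours with is the one in \cite{bll04}. Your overall strategy --- an external-product dg functor $I(X)\otimes I(Y)\to I(X\times Y)$, extension to $\Perf\mathchar`-(I(X)\otimes I(Y))$ by the universal property, full faithfulness on representables via the K\"unneth formula, and then a generation argument for essential surjectivity --- is the standard route and is essentially the one taken there. The first three steps are fine as sketched; the only point needing care is that composing $\boxtimes$ with an injective (or K-injective) replacement is a priori a quasi-functor rather than a strict dg functor, which you acknowledge and which is harmless.

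The essential surjectivity step, however, rests on a false claim: it is not true that every coherent sheaf on $X\times Y$ admits a \emph{finite} locally free resolution by direct sums of the line bundles $\mcO_X(-a)\boxtimes\mcO_Y(-b)$. This holds on products of projective spaces by Hilbert's syzygy theorem, but already on a product of curves of positive genus the iterated syzygy sheaves are vector bundles with no reason to split as sums of line bundles, and the process does not terminate; smoothness only guarantees that the $(\dim X+\dim Y)$-th syzygy is locally free, not that it lies among your external products. The statement you actually need --- that the thick (shift-, cone- and summand-closed) subcategory generated by the objects $F\boxtimes G$ is all of $D^b(X\times Y)$ --- is nevertheless true, and the standard way to get it is via compact generation: the negative powers of the ample line bundle $\mcO_X(1)\boxtimes\mcO_Y(1)$ have vanishing right orthogonal in $D_{\mathrm{qc}}(X\times Y)$, hence form a set of compact generators, and by the Neeman--Thomason localization theorem their thick closure is the whole subcategory of compact objects, which equals $\Perf(X\times Y)=D^b(X\times Y)$ by smoothness. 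Since you already pass to the idempotent-complete pretriangulated closure on the source, this substitution repairs the argument (a resolution-of-the-diagonal argument, or Orlov's results on generators, would do equally well), after which your proof is complete and agrees in substance with \cite[Theorem 5.5]{bll04}.
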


The $\bullet$-product is well-behaved under semi-orthogonal decompositions: 
\begin{prop}[{\cite[Proposition 4.6]{bll04}}] \label{prop:bulletSOD}
Let $\mcA, \mcB, \mcC, \mcD$ be pre-triangulated dg categories. 
Suppose that we have a semi-orthogonal decomposition 
$H^0(\mcC)=\langle H^0(\mcA), H^0(\mcB) \rangle$. 
Then we have a semi-orthogonal decomposition 
\[
H^0(\mcC \bullet \mcD)
=\langle H^0(\mcA \bullet \mcD), H^0(\mcB \bullet \mcD) \rangle. 
\]
\end{prop}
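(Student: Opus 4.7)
The plan is to verify the desired semi-orthogonal decomposition directly on generators of $H^0(\mcC \bullet \mcD) = H^0(\Perf\text{-}(\mcC \otimes \mcD))$, namely the representable modules. The inclusions of dg subcategories $\mcA, \mcB \hookrightarrow \mcC$ tensor with $\mcD$ to give fully faithful dg functors $\mcA \otimes \mcD, \mcB \otimes \mcD \hookrightarrow \mcC \otimes \mcD$, and extension of scalars then produces dg functors
\[
\iota_{\mcA} \colon \mcA \bullet \mcD \to \mcC \bullet \mcD, \qquad
\iota_{\mcB} \colon \mcB \bullet \mcD \to \mcC \bullet \mcD.
\]
The claim is that the essential images of these functors on homotopy categories realize the asserted SOD.

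For semi-orthogonality, both images are triangulated subcategories of $H^0(\mcC \bullet \mcD)$ generated, under shifts, cones, and direct summands, by the representables $h_a \otimes h_d$ and $h_b \otimes h_{d'}$ for $a \in \mcA$, $b \in \mcB$, $d, d' \in \mcD$. Since $\Hom$ is bi-exact, it suffices to check
\[
H^n \RHom_{\mcC \otimes \mcD}(h_b \otimes h_{d'}, h_a \otimes h_d) = 0
\]
for every $n$. The dg Yoneda lemma over the tensor product identifies this $\RHom$ with the Hom complex $\mcC(b, a) \otimes_{\bK} \mcD(d', d)$. The SOD hypothesis $H^0(\mcC) = \langle H^0(\mcA), H^0(\mcB) \rangle$ forces the cohomology $H^{*}\mcC(b,a)$ to vanish, and then a K\"unneth-type argument forces the tensor complex to be acyclic.

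Next, for generation, it is enough to show that every representable $h_c \otimes h_d$ with $c \in \mcC$, $d \in \mcD$ lies in the triangulated hull of the two images. Working inside the pre-triangulated hull of $\mcC$ (so that cones are strict), the SOD yields a distinguished triangle $b \to c \to a \to b[1]$ with $a \in \mcA$, $b \in \mcB$. Tensoring this triangle with $d$ and applying the dg Yoneda embedding into $\Perf\text{-}(\mcC \otimes \mcD)$ produces the distinguished triangle
\[
h_b \otimes h_d \to h_c \otimes h_d \to h_a \otimes h_d \to (h_b \otimes h_d)[1]
\]
in $H^0(\mcC \bullet \mcD)$. This exhibits $h_c \otimes h_d$ as an extension of an object of $\iota_\mcA(H^0(\mcA \bullet \mcD))$ by one of $\iota_\mcB(H^0(\mcB \bullet \mcD))$; since such representables generate, the whole of $H^0(\mcC \bullet \mcD)$ is built from the two components.

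The main technical obstacle will be the rigorous handling of $\RHom$ between representables at the dg level: one must justify that the representable module $h_a \otimes h_d$ over $\mcC \otimes \mcD$ is quasi-isomorphic to the extension of scalars of the corresponding representable over $\mcA \otimes \mcD$, and that extension of scalars defines a fully faithful functor $\Perf\text{-}(\mcA \otimes \mcD) \to \Perf\text{-}(\mcC \otimes \mcD)$, so that Hom spaces computed inside the sub-dg-category agree with those in the ambient one. This should follow from the restriction-extension adjunction together with the standard fact that, for representable modules, these two operations are compatible with the Hom complex; combined with the K\"unneth vanishing above, this closes the argument.
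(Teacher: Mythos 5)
You should first note that the paper gives no proof of this statement: it is imported verbatim from Bondal--Larsen--Lunts \cite[Proposition 4.6]{bll04}, so there is no in-paper argument to measure yours against. Your direct proof is correct and follows the natural route (essentially that of the cited source): identify the two components with the essential images of the extension-of-scalars functors from $\mcA\bullet\mcD$ and $\mcB\bullet\mcD$; check semi-orthogonality on representable generators, where the dg Yoneda lemma gives $\RHom(h_b\otimes h_{d'},\,h_a\otimes h_d)\simeq \mcC(b,a)\otimes_{\bK}\mcD(d',d)$, the complex $\mcC(b,a)$ is acyclic because $H^n\mcC(b,a)=\Hom_{H^0(\mcC)}(b,a[n])=0$ (the components of a semi-orthogonal decomposition are shift-closed and $\mcC$ is pre-triangulated), and the K\"unneth formula over the field $\bK$ kills the tensor product; then deduce generation from the decomposition triangles of the objects $h_c\otimes h_d$. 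Two routine points should be made explicit in a final write-up. First, the full faithfulness of extension of scalars along $\mcA\otimes\mcD\hookrightarrow\mcC\otimes\mcD$ (needed so that the components really are copies of $H^0(\mcA\bullet\mcD)$ and $H^0(\mcB\bullet\mcD)$): on representables it is exactly the Yoneda computation you already performed, and it passes to all perfect modules by d\'evissage; you flag this correctly. Second, you exhibit decomposition triangles only for the generators $h_c\otimes h_d$; to conclude that \emph{every} object of $H^0(\mcC\bullet\mcD)$ admits one, you must invoke the standard fact that, once $\Hom(\mcB',\mcA')=0$ is established, the full subcategory of objects fitting into a triangle $B\to T\to A$ with $A\in\mcA'$, $B\in\mcB'$ is closed under shifts, cones and direct summands, hence contains the thick hull of the representables, which is all of $H^0(\mcC\bullet\mcD)$. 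With those two sentences added, the argument is complete.
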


\subsection{Equivariant categories} 
\label{sec:eqcat}
In this subsection, 
we recall basic facts about equivariant categories. 
We refer to \cite{bo20, ela14} for more details. 

Let $\mcD$ be a $\bC$-linear category, $G$ be a finite group. 
Recall that a {\it $G$-action} on $\mcD$ consists of the following data: 
\begin{itemize}
\item An autoequivalence $\rho_g$ of $\mcD$ for each $g \in G$, 
\item A natural isomorphism 
$\theta_{g, h} \colon \rho_g\rho_h \xrightarrow{\sim} \rho_{gh}$ 
for each pair $g, h \in G$
such that the following diagram commutes for all $g, h, k \in G$: 
\[
\xymatrix{
&\rho_g\rho_h\rho_k \ar[r]^{\rho_g\theta_{h, k}} \ar[d]_{\theta_{g, h}\rho_k}
&\rho_g\rho_{hk} \ar[d]^{\theta_{g, hk}} \\
&\rho_{gh}\rho_k \ar[r]_{\theta_{gh, k}}
&\rho_{ghk}. 
}
\]
\end{itemize}

Given a $G$-action on the category $\mcD$, 
we define the {\it equivariant category} $\mcD^G$ as follows: 
\begin{itemize}
\item An object of $\mcD^G$ is a data $(E, \phi_g)$, 
where $E$ is an object of $\mcD$ and 
$\phi_g \colon E \xrightarrow{\sim} \rho_gE$ is an isomorphism for each $g \in G$, 
such that the following diagram commutes for each pair of elements $g, h \in G$: 
\[
\xymatrix{
&E \ar[r]^{\phi_{gh}} \ar[d]_{\phi_g}
&\rho_{gh}E \\
&\rho_{g}E \ar[r]_{\rho_g\phi_h}
&\rho_g\rho_hE \ar[u]_{\theta_{g, h}}. 
}
\]

\item A morphism between objects $(E, \phi_g)$ and $(F, \psi_g)$ is a morphism 
$f \colon E \to F$ in the category $\mcD$ such that 
the following diagram is commutative for every $g \in G$: 
\[
\xymatrix{
&E \ar[r]^{f} \ar[d]_{\phi_g}
&F \ar[d]^{\psi_g} \\
&\rho_gE \ar[r]_{\rho_g f}
&\rho_gF. 
}
\]
\end{itemize}

For a subgroup $H \subset G$, we have the {\it restriction} and the {\it induction} functors: 
\begin{align*}
&\Res^H_G \colon \mcD^G \to \mcD^H, \quad
\Ind^G_H \colon \mcD^H \to \mcD^G. 
\end{align*}
The restriction functor is defined in an obvious way. 
The induction functor is defined by 
$\Ind^G_H(E, \phi_h) \coloneqq (\oplus_{[g_j] \in G/H} \rho_{g_j}E, \epsilon_g)$, 
where for $g \in G$, the isomoprphism $\epsilon_\sigma$ restricted to the summand 
$\rho_{g_j}E$ is the composition 
\[
\rho_{g_j}E \xrightarrow{\rho_{g_j}\phi_h}
\rho_{g_j}\rho_hE \xrightarrow{\theta_{g_j, h}} 
\rho_{g_jh}E=\rho_{gg_k}E 
\xrightarrow{\theta^{-1}_{g, g_k}} 
\rho_g\rho_{g_k}E. 
\]
Here, the elements $h \in H$ and $g_k \in G$ are defined by 
$g_jh=gg_k$.

Suppose now that $\mcD$ is a dg-enhanced triangulated category 
with a $G$-action. 
It is known that the equivariant category $\mcD^G$ 
is again triangulated (cf. \cite[Corollary 6.10]{ela14}). 

The following Elagin's theorem is crucial for our purpose: 
\begin{thm}[\cite{ela12}, {\cite[Theorem 6.2]{shi18}}] \label{thm:Elagin}
Let $\mcD$ be a dg-enhanced triangulated category, $G$ be a finite group acting on $\mcD$ such that the characteristic of $\bK$ 
does not divide the order of $G$. 
Suppose that we have a semi-orthogonal decomposition 
$\mcD=\langle \mcA, \mcB \rangle$ 
whose components are preserved by the $G$-action. 

Then we have a semi-orthogonal decomposition 
\[
\mcD^G=\langle \mcA^G, \mcB^G \rangle. 
\]
\end{thm}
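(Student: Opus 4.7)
The plan is to verify the two conditions defining a semi-orthogonal decomposition $\mcD^G=\langle \mcA^G,\mcB^G\rangle$ directly: (a) semi-orthogonality $\Hom_{\mcD^G}(\mcB^G,\mcA^G)=0$, and (b) existence, for every $(E,\phi_g)\in\mcD^G$, of a functorial distinguished triangle $B'\to E\to A'\to B'[1]$ with $A'\in\mcA^G$ and $B'\in\mcB^G$. Condition (a) is the easy half: a morphism in $\mcD^G$ from $(B,\psi_g)$ to $(A,\phi_g)$ is by definition a $G$-invariant element of $\Hom_{\mcD}(B,A)$, which is already zero by the hypothesis on $\mcD$. This step makes no use of the characteristic assumption.

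For condition (b) I would construct equivariant enhancements of the two projection functors $p_{\mcA}\colon\mcD\to\mcA$ and $p_{\mcB}\colon\mcD\to\mcB$. The point is that since $\mcA$ and $\mcB$ are stable under each $\rho_g$, uniqueness of the projection triangle supplies canonical natural isomorphisms $\rho_g\circ p_{\mcA}\xrightarrow{\sim}p_{\mcA}\circ\rho_g$ (and analogously for $p_{\mcB}$) that are compatible with the associators $\theta_{g,h}$. Given $(E,\phi_g)$, the composites
\[
p_{\mcA}(E)\xrightarrow{p_{\mcA}(\phi_g)}p_{\mcA}(\rho_g E)\xrightarrow{\sim}\rho_g\, p_{\mcA}(E)
\]
then endow $p_{\mcA}(E)$ with a $G$-equivariant structure, and a routine diagram chase with the cocycle $\theta_{g,h}$ verifies the required compatibility. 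One does the same for $p_{\mcB}$, and then the underlying distinguished triangle $p_{\mcB}(E)\to E\to p_{\mcA}(E)\to$ in $\mcD$ promotes to a distinguished triangle in $\mcD^G$, giving (b).

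The main obstacle, and where the characteristic hypothesis really bites, is to promote these constructions from the underlying triangulated categories to the dg level, and to conclude that $\mcA^G$ and $\mcB^G$ are genuinely full admissible dg-enhanced subcategories of $\mcD^G$ rather than merely closed under morphisms. Here I would invoke the averaging idempotent $e_G=\tfrac{1}{|G|}\sum_{g\in G}g$, which is well-defined precisely because $\mathrm{char}(\bK)\nmid|G|$; it is this idempotent that makes $\Res^G_H$ and $\Ind^G_H$ into a biadjoint pair with the unit $\mathrm{id}\to\Res\circ\Ind$ split, and hence guarantees the adjoints to the inclusions $\mcA^G,\mcB^G\hookrightarrow\mcD^G$ required for admissibility. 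Once this is in place, the verification that the equivariant projections obtained above agree with the left/right adjoints of the inclusions is formal, and the semi-orthogonal decomposition $\mcD^G=\langle\mcA^G,\mcB^G\rangle$ follows.
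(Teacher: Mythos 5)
First, a point of comparison: the paper does not prove Theorem \ref{thm:Elagin} at all --- it is quoted from Elagin \cite{ela12} and Shinder \cite[Theorem 6.2]{shi18} --- so there is no internal argument to measure yours against. Your part (a) is exactly right and is also how the cited sources dispose of semiorthogonality: an equivariant morphism is in particular a morphism in $\mcD$, which already vanishes.

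In part (b) there are two genuine soft spots. The step ``the underlying distinguished triangle $p_{\mcB}(E)\to E\to p_{\mcA}(E)$ promotes to a distinguished triangle in $\mcD^G$'' is not automatic: equipping the three terms with equivariant structures and checking that the maps are equivariant does not make the triangle distinguished in $\mcD^G$, since distinguished triangles there are defined through the dg enhancement rather than detected by the forgetful functor. The standard repair is to take the cone in $\mcD^G$ of your equivariant morphism $E\to p_{\mcA}(E)$ and note that its image under the exact forgetful functor is $p_{\mcB}(\Res E)[1]$, so the third vertex lies in $\mcB^G$; but this presupposes that $\mcD^G$ is triangulated and agrees with $H^0$ of the dg equivariant category, and \emph{that} --- not admissibility --- is where $\cha \bK\nmid |G|$ really enters (Lemma \ref{lem:H0perfG}, i.e.\ \cite[Lemma 8.6]{ela14}: strictifying a homotopy-coherent equivariant structure uses averaging over $G$). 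Relatedly, $\Res$ and $\Ind$ are biadjoint in any characteristic; what the averaging actually buys is that the composite $\id_{\mcD^G}\to\Ind\Res\to\id_{\mcD^G}$ equals $|G|\cdot\id$, hence is invertible, so every equivariant object is a direct summand of an induced one. Shinder's cited proof uses exactly this to get generation --- apply $\Ind$ to the decomposition triangle of $\Res E$, whose outer terms land in $\mcA^G$ and $\mcB^G$ because $\mcA$ and $\mcB$ are $G$-stable, and then use that the subcategory of decomposable objects is thick --- which sidesteps your construction of equivariant structures on the projections entirely. Your route can be made to work (the coherence of $\rho_g\circ p_{\mcA}\cong p_{\mcA}\circ\rho_g$ with the $\theta_{g,h}$ does follow from uniqueness of the decomposition triangle), but as written the two steps above are where the content lies, and they are precisely the ones you leave to ``routine'' verification.
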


We also use the following lemma: 
\begin{lem} \label{lem:symAB}
Let $\mcD$ be a triangulated category, 
$G$ a finite group acting on $\mcD$, 
and $H \subset G$ a subgroup. 
Then there exists an equivalence 
\[
\Phi \colon 
\Big(\bigoplus_{[g_j] \in G/H} 
g_j \cdot \mcD
\Big)^{G} 
\xrightarrow{\sim} \mcD^H. 
\]
\end{lem}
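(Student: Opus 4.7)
The plan is to identify $\bigoplus_{[g_j] \in G/H} g_j \cdot \mcD$ with the $G$-category induced from the $H$-category $\mcD$, and to prove an abstract Frobenius reciprocity between its $G$-equivariantization and $\mcD^H$. I will define $\Phi$ as projection onto the $[e]$-th summand and construct an inverse $\Psi$ modeled on the induction functor $\Ind^G_H$ recalled in Section~\ref{sec:eqcat}.

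First, I make the $G$-action on the direct sum explicit: an element $g \in G$ sends the $[g_j]$-th summand to the $[g g_j]$-th summand, acting on objects via the given $G$-action on $\mcD$. The subgroup $H$ then preserves the $[e]$-th summand and restricts there to the original $H$-action on $\mcD$. For a $G$-equivariant object $(E, \epsilon_g) = ((E_{g_j})_{[g_j]}, \epsilon_g)$, the restrictions $\epsilon_h|_{E_e}$ for $h \in H$ endow $E_e$ with an $H$-equivariant structure, as the $G$-cocycle condition restricts to the $H$-cocycle condition. This defines $\Phi$ on objects; on morphisms it is simply projection to the $[e]$-th component.

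For the inverse, given $(F, \psi_h) \in \mcD^H$, set $\Psi(F, \psi_h)_{g_j} \coloneqq g_j \cdot F$ in the $[g_j]$-th summand, with $G$-equivariance isomorphisms defined exactly as for the induction functor of Section~\ref{sec:eqcat}: for $g \in G$ and each summand one writes $g \cdot g_j = g_k h$ with $h \in H$ and $g_k$ a coset representative, and builds the required isomorphism out of $\psi_h$ and the associativity data $\theta_{g_j, h}$, $\theta_{g, g_k}$. The construction is functorial in $(F, \psi_h)$.

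It then remains to check $\Phi \circ \Psi \cong \id_{\mcD^H}$ and $\Psi \circ \Phi \cong \id$. The first is essentially tautological, since $\Psi(F, \psi_h)_e = F$ and the restriction of its $G$-equivariance to $H$ recovers $\psi_h$. For the second, given $(E, \epsilon_g)$, the $[g_j]$-th component of the equivariance isomorphism $\epsilon_{g_j} \colon E \to \rho_{g_j} E$ yields an isomorphism $E_{g_j} \xrightarrow{\sim} g_j \cdot E_e = \Psi(\Phi(E))_{g_j}$; these assemble summand by summand into a natural isomorphism of $G$-equivariant objects. The main obstacle is the bookkeeping of coset representatives and the associator $\theta$: although all the needed compatibility diagrams commute formally by the cocycle conditions on $\epsilon_g$ and $\theta_{g, h}$, unwinding them involves a careful case analysis depending on how each product $g \cdot g_j$ decomposes relative to the chosen representatives.
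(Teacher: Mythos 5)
Your proposal is correct and follows essentially the same route as the paper: $\Phi$ is restriction to $H$ followed by projection onto the $[e]$-summand, the inverse $\Psi$ is the induction functor applied to the inclusion, the composite $\Phi\circ\Psi\cong\id$ is tautological, and $\Psi\circ\Phi\cong\id$ is realized by the components of the equivariance isomorphisms $\phi_{g_j}$, with commutativity following from the cocycle conditions on $\phi$ and $\theta$. The only cosmetic difference is that the paper justifies the existence of the projection functor by invoking Elagin's theorem (Theorem \ref{thm:Elagin}) for the $H$-action, whereas you construct it directly; both are fine.
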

\begin{proof}
To define the functor $\Phi$, let us first consider the restriction functor 
\begin{align*}
\Res^{H}_{G} \colon 
\Big(\bigoplus_{[g_j] \in G/H}
g_j \cdot \mcD 
\Big)^{G} 
\to \Big(\bigoplus_{[g_j] \in G/H}
g_j \cdot \mcD
\Big)^{H}. 
\end{align*}
Note that the action of $H$ on 
$\bigoplus_{[g_j] \in G/H} g \cdot \mcD$ preserves 
the direct summand $e \cdot \mcD=\mcD$. 
Hence, by Theorem \ref{thm:Elagin}, 
we have a direct sum decomposition 
\[
\Big(\bigoplus_{[g_j] \in G/H}
g_j \cdot \mcD
\Big)^{H}
=\mcD^H \oplus \mcC 
\]
for some category $\mcC$ . 
Hence we have the projection functor 
\[
\pr \colon 
\Big(\bigoplus_{[g_j] \in G/H}
g_j \cdot \mcD \Big)^{H}
\to \mcD^H. 
\]

Then we define the functor $\Phi$ to be the composition of the functors 
$\pr$ and $\Res^{H}_{G}$. 
It has a left and right adjoint functor 
$\Psi \coloneqq \Ind^{G}_{H} \circ \iota$, 
where 
\[
\iota \colon
\mcD^H \to 
\Big(\bigoplus_{[g_j] \in G/H}
g_j \cdot \mcD \Big)^{H}
\]
is the inclusion as a direct summand. 

We claim that $\Phi$ and $\Psi$ are inverse to each other. 
From the construction, it is obvious that 
\[
\Phi \circ \Psi \cong \id \colon 
\mcD^H \to \mcD^H. 
\]

For the converse, 
pick an element 
\[
x=(E_j, \phi_g) \in 
\Big(\bigoplus_{[g_j] \in G/H}
g_j \cdot \mcD \Big)^{G}, 
\]
where $E_j \in g_j \cdot \mcD$ 
and $\phi_g \colon \oplus_j E_j \xrightarrow{\sim} \rho_{g}(\oplus_j E_j)$ 
are isomorphisms for $g \in G$. 
Then we have 
\[
\Psi \circ \Phi(x)=\Big(
\bigoplus_{[g_j] \in G/H}\rho_{g_j}E_1, 
\epsilon_g \Big), 
\]
where we put $g_1=e$ so that $E_1 \in e \cdot \mcD=\mcD$. 
Recall that, for $g \in G$, the morphism $\epsilon_g$ 
restricted to the summand $\rho_{g_j}E_1$ is defined by the composition 
\begin{equation} \label{eq:def-epsilon}
\rho_{g_j}E_1 \xrightarrow{\rho_{g_j \phi_h}} 
\rho_{g_j}\rho_{h}E_1 \xrightarrow{\theta_{g_j, h}} 
\rho_{g_j h}E_1=\rho_{gg_k}E_1 
\xrightarrow{\theta_{g, g_k}^{-1}} 
\rho_{g}\rho_{g_k}E_1,
\end{equation}
where the elements $g_k \in G$ and $h \in H$ are defined by 
$g_jh=gg_k$. 
We claim that the isomorphism 
\[
\bigoplus_j \phi_{g_j} \colon 
\bigoplus_j E_j \xrightarrow{\sim} 
\bigoplus_j \rho_{g_j}E_1 
\]
commutes with $\phi_g$ and $\epsilon_g$, and hence 
$x$ and $\Psi \circ \Phi(x)$ are isomorphic. 
Namely, we shall show that the following diagram commutes: 
\begin{equation} \label{eq:psiphi=id}
\xymatrix{
&\oplus_jE_j \ar[rr]^{\oplus_j\phi_{g_j}} \ar[d]_{\phi_g} 
& &\oplus_j \rho_{g_j}E_1 \ar[d]^{\epsilon_g} \\
&\rho_g(\oplus_jE_j) \ar[rr]_{\rho_g(\oplus_j\phi_{g_j})}
& &\rho_g(\oplus_j \rho_{g_j}E_1). 
}
\end{equation}
It is enough to show it for each direct summand. 
Fix $[g_{j_0}] \in G/H$ and $g \in G$. 
As above, we let $h \in H$ and $g_{k_0}$ 
denote the elements satisfying $g_{j_0}h=gg_{k_0}$. 
Then the restriction of the diagram (\ref{eq:psiphi=id}) 
to  the direct summand $g_{j_0} \cdot \mcD$ becomes 
\begin{equation} \label{eq:sigma-act}
\xymatrix{
&E_{j_0} \ar[r]^{\phi_{g_{j_0}}} \ar[d]_{\phi_g}
&\rho_{g_{j_0}}E_1 \ar[d]^{\epsilon_{g}} \\
&\rho_g E_{k_0} \ar[r]_{\rho_{g}\phi_{g_{k_0}}} 
&\rho_g\rho_{g_{k_0}}E_1. 
}
\end{equation}
To prove the commutativity of the diagram (\ref{eq:sigma-act}), consider the following diagram: 
\[
\xymatrix{
&
& 
&E_{j_0} \ar@{=}[r] \ar[d]^{\phi_{g_{j_0}h}} \ar[lld]_{\phi_{g_{j_0}}}
&E_{j_0} \ar[r]^{\phi_g} \ar[d]^{\phi_{gg_{k_0}}} 
&\rho_\sigma E_{k_0} \ar[d]^{\rho_g\phi_{g_{k_0}}} \\
&\rho_{g_{j_0}}E_1 \ar[r]_{\rho_{g_{j_0}}\phi_h} 
&\rho_{g_{j_0}}\rho_h E_1 \ar[r]_{\theta_{g_{j_0}, h}} 
&\rho_{g_{j_0}h}E_1 \ar@{=}[r]
&\rho_{gg_{k_0}}E_1 \ar[r]_{\theta_{g, g_{k_0}}^{-1}}
&\rho_g\rho_{g_{k_0}}E_1 .
}
\]
The middle square is commutative as $g_{j_0}h=gg_{k_0}$; 
the left triangle and the right square are commutative by the definition of equivariant objects. 
Hence the whole diagram commutes. 
Moreover, the bottom composition is exactly $\epsilon_g$ 
(see (\ref{eq:def-epsilon})). 
We conclude that the diagram (\ref{eq:sigma-act}) is commutative, as claimed. 
\end{proof}

\subsection{Group actions on dg categories}
Here, we recall some useful facts when a group acts on a dg category. 

\begin{defin}[{\cite[Definition 8.1]{ela14}}]
    Let $\mcD$ be a dg category and $G$ a finite group. 
    A {\it {(dg) $G$-action on $\mcD$}} consists of the following data: 
    \begin{itemize}
\item A dg autoequivalence $\rho_g$ of $\mcD$ for each $g \in G$, 
\item A closed isomorphism 
$\theta_{g, h} \colon \rho_g\rho_h \xrightarrow{\sim} \rho_{gh}$ 
of degree $0$ for each pair $g, h \in G$
such that the following diagram commutes for all $g, h, k \in G$: 
\[
\xymatrix{
&\rho_g\rho_h\rho_k \ar[r]^{\rho_g\theta_{h, k}} \ar[d]_{\theta_{g, h}\rho_k}
&\rho_g\rho_{hk} \ar[d]^{\theta_{g, hk}} \\
&\rho_{gh}\rho_k \ar[r]_{\theta_{gh, k}}
&\rho_{ghk}. 
}
\]
\end{itemize}
\end{defin}

Given a dg caction of a finite group $G$ on a dg category $\mcD$, 
we can define the dg category $\mcD^G$ of $G$-equivariant objects as in Section \ref{sec:eqcat}, 
but the isomorphisms $\phi_g$ are required to be closed and of degree $0$ (see \cite[Definition 8.2, Proposition 8.3]{ela14}). 

We will use the following lemma: 
\begin{lem}[{\cite[Lemma 8.6]{ela14}}]
\label{lem:H0perfG}
    Let $\mcD$ be a dg category, $G$ a finite group acting on $\mcD$ such that the characteristic of $\bK$ 
does not divide the order of $G$. Then we have a natural equivalence
    \[
    H^0\left(\Perf\left(\mcD^G \right) \right) \cong 
    \left(H^0(\Perf(\mcD)) \right)^G. 
    \]
\end{lem}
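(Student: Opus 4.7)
The plan is to construct a natural dg functor $\Phi \colon \Perf(\mcD^G) \to \Perf(\mcD)^G$ and show that it induces the required equivalence after applying $H^0$; here $\Perf(\mcD)$ inherits a dg $G$-action from that of $\mcD$. The first step is to set up this $G$-action: each dg autoequivalence $\rho_g$ of $\mcD$ induces an autoequivalence of $\mcD^{\op}\mathchar`-\Mod$ by pullback along $\rho_{g^{-1}}$, which preserves $\Perf(\mcD)$, and the coherence isomorphisms $\theta_{g,h}$ transport to give a dg $G$-action in the sense defined earlier.

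To define $\Phi$, I would use the adjoint pair consisting of the forgetful dg functor $U \colon \mcD^G \to \mcD$ and the induction functor $\Ind \colon \mcD \to \mcD^G$, which is both a left and right adjoint to $U$ because $G$ is finite and $\cha(\bK) \nmid |G|$. For $N \in \Perf(\mcD^G)$, set $\Phi(N) \coloneqq N \circ \Ind^{\op}$, which is a perfect $\mcD$-module. The $G$-equivariant structure on $\Phi(N)$ comes from natural isomorphisms $\Ind \circ \rho_g \cong \Ind$ (obtained by re-indexing the summands of $\Ind F = \bigoplus_g \rho_g F$), suitably compatible with the coherence data $\theta_{g,h}$. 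On representables, one checks directly that $\Phi$ sends $h_{(E, \phi)} \in \Perf(\mcD^G)$ to $(h_E, h_{\phi_g}) \in \Perf(\mcD)^G$.

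To show $\Phi$ induces an equivalence on $H^0$, I would construct a quasi-inverse $\Psi \colon \Perf(\mcD)^G \to \Perf(\mcD^G)$. Given $(M, \psi_g) \in \Perf(\mcD)^G$, one defines $\Psi(M, \psi_g)$ on $(E, \phi) \in \mcD^G$ as the image of the averaging idempotent $\tfrac{1}{|G|}\sum_{g \in G} \psi_g \circ M(\phi_g)$ acting on $M(E)$. This is well-defined by the characteristic hypothesis. The compositions $\Phi \circ \Psi$ and $\Psi \circ \Phi$ are then identified with the identity by tracing the averaging idempotent through both sides, using the standard fact that $G$-invariants and $G$-coinvariants coincide when $|G|$ is invertible in $\bK$.

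The main obstacle, I expect, is verifying that $\Psi$ genuinely lands in the perfect subcategory $\Perf(\mcD^G)$ rather than just in $(\mcD^G)^{\op}\mathchar`-\Mod$. The natural construction produces a module that is a retract of a pullback of $M$ along $U^{\op}$, so one must argue that such retracts remain perfect. This is precisely where the invertibility of $|G|$ is indispensable: it ensures the averaging idempotent splits in the Karoubi-closed dg category of perfect modules, so that $\Psi$ factors through $\Perf(\mcD^G)$. Granting this, the remaining verifications that $\Phi \circ \Psi \cong \id$ and $\Psi \circ \Phi \cong \id$ are formal and reduce to bookkeeping with the idempotent.
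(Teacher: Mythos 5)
The paper does not actually prove this statement: it is imported verbatim from Elagin \cite[Lemma 8.6]{ela14}, so there is no in-paper argument to compare yours against. Your outline is, in substance, the standard proof (and essentially Elagin's): restriction along the induction functor $\Ind \colon \mcD \to \mcD^G$ gives a dg functor $\Phi$ sending $h_{(E,\phi_g)}$ to $h_E$ with its induced equivariant structure, and the two-sided adjunction between $\Ind$ and the forgetful functor $U$, together with averaging over $G$, exhibits every equivariant perfect module as a direct summand of an object in the image. Two points deserve more care than your sketch gives them. First, the right-hand side $\left(H^0(\Perf(\mcD))\right)^G$ consists of objects whose equivariant structure lives only in the homotopy category, so your averaging endomorphism $\tfrac{1}{|G|}\sum_{g} \psi_g \circ M(\phi_g)$ is an idempotent only up to homotopy, and its ``image'' is not a well-defined subcomplex functorial in $(E,\phi)$; the clean way out is to drop $\Psi$ entirely and instead check that $\Phi$ is fully faithful and that its essential image --- which is thick because $\Perf(\mcD^G)$ is idempotent complete --- contains every $\Ind(M)$ for $M \in \Perf(\mcD)$ and hence, by the averaging splitting, every object. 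Second, the hypothesis that $\cha(\bK)$ does not divide $|G|$ is needed not only to split the idempotent but already to identify morphism spaces: full faithfulness of $\Phi$ on representables comes down to $H^0\big(\Hom_{\mcD}(E,F)^G\big) \cong \big(H^0\Hom_{\mcD}(E,F)\big)^G$, which uses exactness of $G$-invariants. With these repairs your argument goes through.
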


\section{Proof of the main theorem} \label{sec:main}

\subsection{Notation for symmetric products}
Let $\mcD$ be a dg category, $n >0$ a positive integer. 
Following \cite{gk14}, we define the {\it $n$-th symmetric product} $\Sym^n\mcD$ of $\mcD$ 
as follows: 
\[
\Sym^n\mcD \coloneqq
\Perf \big(
(\mcD^{\bullet n})^{\fS_n}
\big), 
\]
where the symmetric group $\fS_n$ acts on $\mcD^{\bullet n}$ 
by permutations of the components. 
For $n=0$, we define 
$\Sym^0\mcD \coloneqq \Mod_\bK$, the dg category of dg vector spaces. 

\begin{rmk}
It follows from Lemma \ref{lem:H0perfG} that we have 
\[
H^0(\Sym^n\mcD) \cong 
\big(H^0(\mcD^{\bullet n}) \big)^{\fS_n}.
\]
\end{rmk}

\begin{ex}
\label{ex:symIX}
Let $X$ be a smooth projective variety. Then we have 
\[
H^0(\Sym^nI(X)) \cong D^b([X^{\times n}/\fS_n]), 
\]
see \cite[Example 7.1]{gkl21} and 
\cite[Example 2.2.8(a)]{gk14}. 
\end{ex}

For $i=0, \cdots, n$, we fix the embedding 
$\fS_{n-i} \times \fS_i \hookrightarrow \fS_n$, 
where $(\tau, \eta) \in \fS_{n-i} \times \fS_i$ acts on $\{1, \cdots, n\}$ by 
\[
(\tau, \eta) \cdot \{1, \cdots, n\}:=\{\tau(\{1, \cdots, n-i\}), \eta(\{n-i+1, \cdots, n\})\}. 
\]

The quotient $\fS_n/(\fS_{n-i} \times \fS_i)$ has ${n \choose i}$ elements. 
We fix the representatives of cosets in $\fS_n/(\fS_{n-i} \times \fS_i)$: 
\[
\sigma_j \in \fS_n, \quad 
1 \leq j \leq {n \choose i}. 
\]

\subsection{Semi-orthogonal decompositions for symmetric products}
\begin{lem} \label{lem:SODprod}
Let $\mcA, \mcB, \mcD$ be  pre-triangulated dg categories. 
Suppose that we have a semi-orthogonal decomposition 
$H^0(\mcD)=\langle H^0(\mcA), H^0(\mcB) \rangle$. 

Then we have a semi-orthogonal decomposition 
\begin{equation} \label{eq:SODprod}
H^0(\mcD^{\bullet n})=
\Big\langle
\bigoplus_{j=1}^{{n \choose i}}
\sigma_j \cdot H^0(\mcA^{\bullet (n-i)} \bullet \mcB^{\bullet i}) \colon 
i=0, \cdots, n
\Big\rangle. 
\end{equation}
for each positive integer $n >0$. 
\end{lem}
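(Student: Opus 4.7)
The plan is to iterate Proposition \ref{prop:bulletSOD} across all $n$ tensor slots, produce a fine semi-orthogonal decomposition of $H^0(\mcD^{\bullet n})$ into $2^n$ admissible pieces indexed by subsets $I \subseteq \{1, \dots, n\}$, and then regroup these pieces according to $|I|$. For each $I$, set
\[
\mcP_I \coloneqq H^0\!\left(\mcC_{I,1} \bullet \cdots \bullet \mcC_{I,n}\right),
\qquad \mcC_{I,k} = \begin{cases} \mcB & \text{if } k \in I, \\ \mcA & \text{if } k \notin I; \end{cases}
\]
the goal is then to show that $L_i \coloneqq \bigoplus_{|I|=i} \mcP_I$ realizes the $i$-th component claimed by the lemma.

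First I would apply Proposition \ref{prop:bulletSOD} iteratively, one slot at a time, to obtain a semi-orthogonal decomposition of $H^0(\mcD^{\bullet n})$ whose pieces are precisely the $\mcP_I$ (in some fixed lexicographic order induced by the chosen order of iteration). The main technical input, and the only step that requires real work, is the following Hom-vanishing: for any two distinct subsets $I, I'$ with $I' \setminus I \neq \emptyset$, one has $\Hom(\mcP_{I'}, \mcP_I) = 0$. I would prove this by choosing $k \in I' \setminus I$ and iterating Proposition \ref{prop:bulletSOD} \emph{starting} with slot $k$, which produces an SOD in which the piece containing $\mcP_I$ (with $\mcA$ at slot $k$) strictly precedes the piece containing $\mcP_{I'}$ (with $\mcB$ at slot $k$). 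Two consequences follow: when $|I| = |I'|$ and $I \neq I'$, both $I \setminus I'$ and $I' \setminus I$ are nonempty, so $\mcP_I$ and $\mcP_{I'}$ are mutually orthogonal in both directions; and when $|I'| > |I|$ the set $I' \setminus I$ is automatically nonempty, yielding the cross-level vanishing.

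Once this orthogonality is in hand, the pieces of a fixed cardinality $|I| = i$ can be amalgamated into an admissible direct summand $L_i$, and the cross-level vanishing gives $\Hom(L_{i'}, L_i) = 0$ for $i' > i$; combined with the generation inherited from the iterated SOD, this yields $H^0(\mcD^{\bullet n}) = \langle L_0, L_1, \dots, L_n \rangle$. The final identification is then a matter of bookkeeping: the subsets of $\{1,\dots,n\}$ of size $i$ correspond bijectively to the cosets in $\fS_n/(\fS_{n-i} \times \fS_i)$ via the representatives $\sigma_j$, and the piece $\mcP_{\sigma_j(\{n-i+1,\dots,n\})}$ is precisely $\sigma_j \cdot H^0(\mcA^{\bullet(n-i)} \bullet \mcB^{\bullet i})$.

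The main obstacle lies in the orthogonality step, specifically in justifying that Proposition \ref{prop:bulletSOD} can be invoked starting at an arbitrary slot $k$ rather than only the leftmost one. This relies implicitly on the (quasi-)associativity and symmetry of the $\bullet$-product, but both are standard properties of $\Perf(\mcA \otimes \mcB)$. An alternative route would be to establish a Künneth-type formula for $\Hom$ in $\bullet$-products and deduce all the required vanishings in one stroke, but the slot-by-slot approach has the advantage of not requiring any machinery beyond Proposition \ref{prop:bulletSOD} itself.
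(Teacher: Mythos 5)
Your proposal is correct and follows essentially the same route as the paper: iterate Proposition \ref{prop:bulletSOD} to get the fine $2^n$-piece decomposition, establish the Hom-vanishing $\Hom(\mcP_{I'},\mcP_I)=0$ whenever some slot carries $\mcB$ for $I'$ and $\mcA$ for $I$, and regroup by $|I|$. The only cosmetic difference is that the paper justifies this vanishing by reducing to the generating tensor category $\mcA^{\otimes(n-i)}\otimes\mcB^{\otimes i}$ (perfect modules being semi-free), whereas you re-run the iteration starting at the distinguishing slot; both are fine.
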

\begin{proof}
Fix an integer $i \in \{0, \cdots, n\}$. 
We first prove that the subcategories 
\[
\sigma_j \cdot H^0(\mcA^{\bullet (n-i)} \bullet \mcB^{\bullet i}) \subset H^0(\mcD^{\bullet n}), 
\quad 1 \leq j \leq {n \choose i}, 
\]
are completely orthogonal to each other. 
Since a perfect module is semi-free, it is enough to show that 
\[
\sigma_j \cdot H^0(\mcA^{\otimes (n-i)} \otimes \mcB^{\otimes i}) \subset H^0(\mcD^{\otimes n}), 
\quad 1 \leq j \leq {n \choose i}, 
\]
are completely orthogonal to each other. 
For each 
$1 \leq j, k \leq {n \choose i}$ with 
$j \neq k$, there exists $l \in \{1, \cdots, n\}$ such that the $l$th component of the tensor product 
$\sigma_j \cdot (\mcA^{\otimes (n-i)} \otimes \mcB^{\otimes i})$ 
(resp. $\sigma_k \cdot (\mcA^{\otimes (n-i)} \otimes \mcB^{\otimes i})$) is 
$\mcB$ (resp. $\mcA$). 
Since $\mcB$ is left-orthogonal to $\mcA$, 
$\sigma_j \cdot (\mcA^{\otimes (n-i)} \otimes \mcB^{\otimes i})$ is left-orthogonal to 
$\sigma_k \cdot (\mcA^{\otimes (n-i)} \otimes \mcB^{\otimes i})$. 
Since $j$ and $k$ are arbitrary as long as $j \neq k$, 
we conclude that 
\[
\sigma_j \cdot H^0(\mcA^{\otimes (n-i)} \otimes \mcB^{\otimes i}) \subset H^0(\mcD^{\otimes n}), 
\quad 1 \leq j \leq {n \choose i}, 
\]
are completely orthogonal to each other, as claimed. 

Now the fact that 
the right hand side of (\ref{eq:SODprod}) 
forms a semi-orthogonal decomposition 
follows from Proposition \ref{prop:bulletSOD}. 
\end{proof}

\begin{thm} \label{thm:SymSOD}
Let $n >0$ be a positive integer. 
Assume that the characteristic of $\bK$ is either zero 
or greater than $n$. 
Let $\mcA, \mcB, \mcD$ be pre-triangulated dg categories. 
Suppose that we have a semi-orthogonal decomposition 
$H^0(\mcD)=\langle H^0(\mcA), H^0(\mcB) \rangle$. 

Then we have a semi-orthogonal decomposition 
\[
H^0(\Sym^n(\mcD))=\left\langle
H^0(\Sym^{n-i}\mcA \bullet \Sym^i\mcB) \colon 
i=0, \cdots, n
\right\rangle. 
\]
\end{thm}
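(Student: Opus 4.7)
The plan is to combine the semi-orthogonal decomposition of Lemma~\ref{lem:SODprod} with Elagin's descent theorem (Theorem~\ref{thm:Elagin}) and the identification of equivariant categories provided by Lemma~\ref{lem:symAB}. Throughout, I will use the remark following the definition of $\Sym^n\mcD$ to pass freely between $H^0(\Sym^n\mcD)$ and $(H^0(\mcD^{\bullet n}))^{\fS_n}$.

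First I will check that the SOD of Lemma~\ref{lem:SODprod} is $\fS_n$-equivariant with respect to the natural permutation action on $\mcD^{\bullet n}$. For each fixed $i \in \{0,\ldots,n\}$, the $\fS_n$-action permutes the cosets in $\fS_n/(\fS_{n-i}\times\fS_i)$, hence permutes the $\binom{n}{i}$ direct summands within the $i$-th component $\bigoplus_{j=1}^{\binom{n}{i}} \sigma_j \cdot H^0(\mcA^{\bullet(n-i)}\bullet\mcB^{\bullet i})$, so that each component is $\fS_n$-stable. Iterating Theorem~\ref{thm:Elagin} (peeling off one block at a time, noting that at every stage the remaining ``right'' piece is still $\fS_n$-stable) then yields
\[
H^0(\Sym^n\mcD) = \Big\langle \Big(\bigoplus_{j=1}^{\binom{n}{i}} \sigma_j \cdot H^0(\mcA^{\bullet(n-i)}\bullet\mcB^{\bullet i})\Big)^{\fS_n} : i = 0, \ldots, n\Big\rangle.
\]
The hypothesis $\cha\bK = 0$ or $\cha\bK > n$ is exactly what is needed to make $|\fS_n| = n!$ invertible in $\bK$, as required by Elagin's theorem.

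Next, for each $i$, I will apply Lemma~\ref{lem:symAB} with $G = \fS_n$, $H = \fS_{n-i}\times\fS_i$, and the role of $\mcD$ played by $H^0(\mcA^{\bullet(n-i)}\bullet\mcB^{\bullet i})$ (carrying its natural $H$-action on the tensor factors). This rewrites the $i$-th component of the SOD above as $\bigl(H^0(\mcA^{\bullet(n-i)}\bullet\mcB^{\bullet i})\bigr)^{\fS_{n-i}\times\fS_i}$. Since the subgroups $\fS_{n-i}$ and $\fS_i$ act on disjoint tensor factors, the final step is to identify this category with $H^0(\Sym^{n-i}\mcA \bullet \Sym^i\mcB)$ by taking $\fS_{n-i}$-invariants on the $\mcA$-factors and $\fS_i$-invariants on the $\mcB$-factors separately, and then invoking Lemma~\ref{lem:H0perfG}.

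The most delicate point is this last identification: one needs that the $\bullet$-product commutes with the formation of equivariant categories for commuting actions on the two factors, namely an equivalence $(\mcC_1\bullet\mcC_2)^{G_1\times G_2} \simeq \mcC_1^{G_1}\bullet\mcC_2^{G_2}$ when $G_1$ acts only on $\mcC_1$ and $G_2$ only on $\mcC_2$. This is morally clear from the compatibility of $\Perf$ with tensor products and from invariants under a group of invertible order commuting with tensoring, but it appears to be the only non-formal step in the argument and will require unwinding the definition $\mcC_1\bullet\mcC_2 = \Perf(\mcC_1\otimes\mcC_2)$ and applying Lemma~\ref{lem:H0perfG} to both sides.
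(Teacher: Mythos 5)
Your proposal follows the same route as the paper's own proof: $\fS_n$-equivariance of the decomposition from Lemma~\ref{lem:SODprod}, descent via Elagin's Theorem~\ref{thm:Elagin}, identification of each invariant block with $\bigl(H^0(\mcA^{\bullet(n-i)}\bullet\mcB^{\bullet i})\bigr)^{\fS_{n-i}\times\fS_i}$ via Lemma~\ref{lem:symAB}, and the final passage to $H^0(\Sym^{n-i}\mcA\bullet\Sym^i\mcB)$ via Lemma~\ref{lem:H0perfG}. The compatibility $(\mcC_1\bullet\mcC_2)^{G_1\times G_2}\simeq \mcC_1^{G_1}\bullet\mcC_2^{G_2}$ that you flag as the one delicate point is also the step the paper passes over most quickly, so your argument is correct and not missing anything the paper supplies.
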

\begin{proof}
First, note that the semi-orthogonal component 
\[
\bigoplus_{j=1}^{{n \choose i}}
\sigma_j \cdot H^0(\mcA^{\bullet (n-i)} \bullet \mcB^{\bullet i})
\subset H^0(\mcD^{\bullet n})
\]
in Lemma \ref{lem:SODprod} is preserved by the action of $\fS_n$ 
for each $i=0, \cdots, n$. 
Hence by Elagin's Theorem \ref{thm:Elagin}, 
the semi-orthogonal decomposition (\ref{eq:SODprod}) 
descends to a semi-orthogonal decomposition on 
the category $H^0(\Sym^n\mcD) \cong 
(H^0(\mcD^{\bullet n}))^{\fS_n}$ with semi-orthogonal summands 
\[
\Big(
\bigoplus_{j=1}^{{n \choose i}}
\sigma_j \cdot H^0(\mcA^{\bullet (n-i)} \bullet \mcB^{\bullet i})
\Big)^{\fS_n}. 
\]
By Lemma \ref{lem:symAB}, 
we have an equivalence 
\[
\Big(
\bigoplus_{j=1}^{{n \choose i}}
\sigma_j \cdot H^0(\mcA^{\bullet (n-i)} \bullet \mcB^{\bullet i})
\Big)^{\fS_n} 
\cong 
\Big(
H^0(\mcA^{\bullet (n-i)} \bullet \mcB^{\bullet i})
\Big)^{\fS_{n-i} \times \fS_i}. 
\]
It remains to show that the right hand side of the above equivalence is equivalent to 
$H^0(\Sym^{n-i}\mcA \bullet \Sym^i\mcB)$. 
By Lemma \ref{lem:H0perfG}, we have 
\begin{align*}
\Big(H^0(\mcA^{\bullet (n-i)} \bullet \mcB^{\bullet i})
\Big)^{\fS_{n-i} \times \fS_i} 
&\cong 
H^0\Big(
\Perf\Big(\big(
\mcA^{\bullet (n-i)} \bullet \mcB^{\bullet i}
\big)^{\fS_{n-i} \times \fS_i}
\Big)
\Big) \\
&\cong H^0\big(\Sym^{n-i}\mcA \bullet \Sym^i\mcB\big), 
\end{align*}
as required. 
\end{proof}

\section{Geometric examples} \label{sec:Hilb}
For simplicity, we assume that the base field $\bK$ is algebraically closed of characteristic zero. 
In this section, we consider the case when the dg-category $\mcD$ 
is the standard enhancement $I(X)$ 
of $D^b(X)$, where $X$ is a smooth projective variety. 
As we saw in Example \ref{ex:symIX}, 
we have an equivalence 
$H^0(\Sym^nI(X)) \cong D^b([X^{\times n}/\fS_n])$ 
for each positive integer $n >0$.

\subsection{$\dim X=0$}
In this case, we have 
(without using Theorem \ref{thm:SymSOD}) 
\begin{equation}\label{eq:sympt}
D^b([\pt/\fS_n])
=\left\langle p(n)\mbox{-copies of } D^b(\pt) 
\right\rangle,
\end{equation}

where $p(n)$ is the number of partitions of $n$, 
which coincides with the number of irreducible representations of $\fS_n$. 
Note also that this semi-orthogonal decomposition is completely orthogonal. 

\subsection{$\dim X=1$}
Let $X=C$ be a smooth projective curve. 
If $g(C) \geq 1$, 
then the derived category $D^b(C)$ 
does not admit a non-trivial 
semi-orthogonal decomposition 
by \cite{oka11}. 
When $C=\bP^1$, 
we have 
$D^b(\bP^1)
=\langle D^b(\pt), D^b(\pt) \rangle$, 
and hence 
Theorem \ref{thm:SymSOD} implies 
\begin{equation} \label{eq:symP1}
\begin{aligned}
D^b([(\bP^1)^n/\fS_n])&=
\left\langle
D^b\left([\pt/\fS_{n-i}] \times [\pt/\fS_i]\right) 
\colon i=0, \cdots, n
\right\rangle \\
&=\left\langle
\sum_{i=0}^n p(n-i) \cdot p(i) \mbox{-copies of } D^b(\pt)
\right\rangle. 
\end{aligned}
\end{equation}

On the other hand, we have the following result in all genera: 
\begin{thm}[{\cite[Theorem B]{pvdb19}}] \label{thm:PVdB}
Let $C$ be a smooth projective curve, $n>0$ a positive integer. 
Then we have a semi-orthogonal decomposition 
\[
D^b([C^{\times n}/\fS_n])=
\left\langle
D^b\left(
\prod_{a_i} \Sym^{a_i}C
\right)
\colon a_i \in \bZ_{\geq 0}, \sum_{i=1}^n ia_i=n
\right\rangle. 
\]
\end{thm}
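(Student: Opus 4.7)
The plan is to construct, for each partition $\lambda=(1^{a_1}2^{a_2}\cdots)$ of $n$, a fully faithful exact functor
\[
\Phi_\lambda\colon D^b\Bigl(\prod_i \Sym^{a_i} C\Bigr)\hookrightarrow D^b([C^{\times n}/\fS_n]),
\]
and then to verify that the resulting collection is semi-orthogonal and generates the target. Theorem \ref{thm:SymSOD} alone cannot drive such an argument, because for $g(C)\geq 1$ the category $D^b(C)$ is indecomposable by Okawa's theorem cited above, so there is no nontrivial semi-orthogonal decomposition of $D^b(C)$ to iterate. A genuinely geometric input specific to curves is required.

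For the construction of $\Phi_\lambda$, consider the closed immersion $\delta_\lambda\colon \prod_i C^{a_i}\hookrightarrow C^{\times n}$ which places each point of the $i$-th factor at $i$ prescribed coordinates of $C^{\times n}$. It is equivariant for the embedding $\prod_i(\fS_i\wr\fS_{a_i})\hookrightarrow \fS_n$, in which the $\fS_i$ factors act trivially on the source. Descending to stacks, and separating off the trivially acting $\prod_i \fS_i^{a_i}$ by taking the isotypic component of the trivial character, one obtains a morphism of stacks $\prod_i \Sym^{a_i} C\to[C^{\times n}/\fS_n]$ whose structure sheaf, viewed as a Fourier--Mukai kernel, defines $\Phi_\lambda$.

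To prove that $\{\Phi_\lambda\}_{\lambda\vdash n}$ forms a semi-orthogonal collection in (a refinement of) the dominance order on partitions, I would compute the adjoint compositions $\Phi_\mu^R\circ\Phi_\lambda$ by base change along the scheme-theoretic intersections of the images of the $\delta_\lambda$; for incomparable pairs these intersections lie in strictly deeper diagonals of $C^{\times n}$, and the corresponding normal-bundle Koszul complexes together with the character twists produce the required vanishings. Full faithfulness of each $\Phi_\lambda$ reduces to a self-Ext computation along the self-intersection $\delta_\lambda\times_{C^{\times n}}\delta_\lambda$, again controlled by the characters of the stabilizer on the relevant conormal bundles. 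Generation can then be verified numerically: by the Macdonald-type identity for Hodge polynomials of symmetric products of curves, the Hochschild homologies of $\bigoplus_\lambda D^b(\prod_i\Sym^{a_i}C)$ and of $D^b([C^{\times n}/\fS_n])$ coincide, so once semi-orthogonality is established the count forces the collection to be full.

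The main technical obstacle is the semi-orthogonality check. The big diagonal arrangement in $C^{\times n}$ stratifies into highly singular pieces that meet in intricate patterns, and each deeper stratum carries nontrivial stacky stabilizers whose characters must be tracked carefully to select the correct isotypic components and to obtain the required Ext-vanishings. In essence, the whole theorem is a categorification of the symmetric-function identity underlying the generating function of partition numbers, and the combinatorial bookkeeping of the $\fS_i$-twists at every stratum is the technical heart of the argument.
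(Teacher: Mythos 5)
This statement is not proved in the paper at all: it is imported verbatim as Theorem~B of \cite{pvdb19}, so there is no internal argument to compare yours with. Judged on its own terms, your sketch starts correctly: you rightly observe that Theorem~\ref{thm:SymSOD} cannot be iterated here because $D^b(C)$ is indecomposable for $g(C)\geq 1$ by \cite{oka11}, and the functors $\Phi_\lambda$ you describe (pushforward along the partial diagonals $\prod_i C^{a_i}\hookrightarrow C^{\times n}$, equivariantly for $\prod_i(\fS_i\wr\fS_{a_i})$, followed by projection to an isotypic component) are of the same nature as the functors that Polishchuk--Van den Bergh actually use.

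The genuine gap is your generation step. Additivity of Hochschild homology over a semi-orthogonal decomposition only tells you that the right orthogonal complement $\mcR$ of your semi-orthogonal collection satisfies $\HH_*(\mcR)=0$; it does not force $\mcR=0$. Admissible subcategories with vanishing Hochschild homology (quasi-phantoms and phantoms) exist, and indeed Lemma~\ref{lem:Hilbphantom} of this very paper produces phantom subcategories $\Sym^i\mcA\subset D^b(\Hilb^n(S))$ precisely by exploiting the fact that $\HH_*=0$ does not entail triviality of the category. So ``the count forces the collection to be full'' is not a valid inference; generation has to be established directly, as is done in \cite{pvdb19} by an explicit inductive argument. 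A secondary weakness: semi-orthogonality is an ordered condition, and for incomparable partitions the observation that the relevant intersections ``lie in deeper diagonals'' does not by itself produce the one-sided $\Ext$-vanishing in the direction your (unspecified refinement of the) dominance order requires; pinning down the correct order and the correct character twists on each component is exactly where the technical work of \cite{pvdb19} lives, and it cannot be waved through.
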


\subsection{$\dim X=2$}
Let $X=S$ be a smooth projective surface. 
In this case, we have 
$D^b([S^{\times n}/\fS_n]) \cong D^b(\Hilb^n(S))$ 
by the derived McKay correspondence \cite{bkr01, hai01}.

\subsubsection{Surfaces with full exceptional collections}
\begin{cor}
\label{cor:fullexcep}
Let $\mcD$ be a dg-enhanced triangulated category. Suppose that $\mcD$ has a full exceptional collection of length $l$. 
Then $\Sym^n\mcD$ has a full exceptional collection of length 
\begin{equation} \label{eq:symexcep}
q(n; l) \coloneqq \sum_{\substack{i_1+\cdots+i_l=n \\ i_1, \cdots, i_l \geq 0}}
p(i_1) \cdot \cdots \cdot p(i_l). 
\end{equation}
\end{cor}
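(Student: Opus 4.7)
The plan is to proceed by induction on $l$, the length of the full exceptional collection.

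For the base case $l = 1$, the dg category $\mcD$ has a single exceptional generator $E$ and is therefore quasi-equivalent to $\Perf(\bK)$. Then $\mcD^{\bullet n} \cong \Perf(\bK^{\otimes n}) \cong \Perf(\bK)$, on which $\fS_n$ acts trivially on objects; passing to $\fS_n$-equivariants and then perfect modules recovers a dg enhancement of $D^b([\pt/\fS_n])$. By the orthogonal decomposition \eqref{eq:sympt}, this carries a full exceptional collection of length $p(n)$, which agrees with $q(n;1) = p(n)$.

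For the inductive step, assume the result for length $l-1$. Given a full exceptional collection $\langle E_1, \ldots, E_l \rangle$ on $\mcD$, split off the last object: set $\mcA = \langle E_1, \ldots, E_{l-1} \rangle$ and $\mcB = \langle E_l \rangle$, so that $\mcD = \langle \mcA, \mcB \rangle$. Theorem \ref{thm:SymSOD} then yields
\[
H^0(\Sym^n\mcD) = \left\langle H^0(\Sym^{n-i}\mcA \bullet \Sym^i\mcB) : i = 0, \ldots, n \right\rangle.
\]
By the induction hypothesis, $\Sym^{n-i}\mcA$ admits a full exceptional collection of length $q(n-i; l-1)$, and by the base case $\Sym^i\mcB$ admits one of length $p(i)$.

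It remains to verify that the $\bullet$-product of two pre-triangulated dg categories each equipped with a full exceptional collection again admits a full exceptional collection whose length is the product of the two lengths. This follows from iterating Proposition \ref{prop:bulletSOD}, reducing to the case of length-one collections on each factor, and using that $\Perf(\bK) \bullet \Perf(\bK) \cong \Perf(\bK \otimes \bK) \cong \Perf(\bK)$ is generated by a single exceptional object. Granting this, the summand indexed by $i$ contributes $q(n-i; l-1) \cdot p(i)$ exceptional objects, and the total
\[
\sum_{i=0}^{n} q(n-i; l-1) \cdot p(i) = q(n; l)
\]
is immediate from grouping the defining sum for $q(n;l)$ according to the last index $i_l = i$. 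The only genuinely nontrivial input is Theorem \ref{thm:SymSOD}; the remaining steps are combinatorial bookkeeping. The most delicate point is the base case, where one must confirm that a dg category with a single exceptional generator really behaves like $\Perf(\bK)$ under symmetric products, but this is precisely what \eqref{eq:sympt} records.
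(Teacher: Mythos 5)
Your proof is correct and follows essentially the same route as the paper: induction on $l$, peeling off a single exceptional object to get $\mcD=\langle \mcA,\mcB\rangle$ (the paper peels off the first object rather than the last, which is immaterial), applying Theorem \ref{thm:SymSOD}, using Proposition \ref{prop:bulletSOD} to handle the $\bullet$-product of the pieces, and concluding with the recursion $q(n;l)=\sum_i p(i)\,q(n-i;l-1)$. The base case via \eqref{eq:sympt} is also exactly what the paper does.
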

\begin{proof}
We prove it by induction on $l$. 
When $l=1$, the result follows from (\ref{eq:sympt}). 

Fix an integer $l \geq 2$ and 
assume that the assertion holds 
for $l-1$ and any $n \geq 1$. 
Suppose that $\mcD$ has a full exceptional collection of length $l$, and consider the 
following semi-orthogonal decomposition: 
\[
\mcD=\langle 
D^b(\pt), \mcA
\rangle, 
\]
where $\mcA$ has a full exceptional collection of length $l-1$. 
By Theorem \ref{thm:SymSOD}, we have 
a semi-orthogonal decomposition 
\[
\Sym^n\mcD=\langle
\Sym^{n-i}D^b(\pt) \bullet \Sym^i\mcA 
\colon 0 \leq i \leq n
\rangle. 
\]
By the induction hypothesis, 
$\Sym^i\mcA$ has a full exceptional collection of length $q(i; l-1)$ for each $0 \leq i \leq n$. 
Hence $\Sym^n\mcD$ has a full exceptional collection of length 
\[
\sum_{i=0}^n p(n-i)q(i;l-1)
=q(n; l), 
\]
as claimed. 
\end{proof}

In particular, we have: 
\begin{cor}
Let $S$ be a smooth projective surface. 
Suppose that $D^b(S)$ has a full exceptional collection of length $l$. 
Then $D^b(\Hilb^n(S))$ has a full exceptional collection of length 
$q(n; l)$. 
\end{cor}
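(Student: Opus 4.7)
The plan is to obtain this statement as an immediate consequence of Corollary \ref{cor:fullexcep} combined with the derived McKay correspondence. Since the real work has already been done, the proof should amount to assembling the pieces.

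First, I would apply Corollary \ref{cor:fullexcep} to the standard dg enhancement $I(S)$ of $D^b(S)$. Since $H^0(I(S)) \cong D^b(S)$ has a full exceptional collection of length $l$, the corollary immediately gives that the homotopy category $H^0(\Sym^n I(S))$ has a full exceptional collection of length $q(n;l)$.

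Next, I would invoke the two equivalences that identify $H^0(\Sym^n I(S))$ with $D^b(\Hilb^n(S))$. By Example \ref{ex:symIX} we have $H^0(\Sym^n I(S)) \cong D^b([S^{\times n}/\fS_n])$, and by the derived McKay correspondence of Bridgeland--King--Reid and Haiman \cite{bkr01, hai01} there is an equivalence $D^b([S^{\times n}/\fS_n]) \cong D^b(\Hilb^n(S))$ (this is the chain of equivalences in (\ref{eq:introSymHilb})). Since exact equivalences of triangulated categories preserve full exceptional collections, transporting the collection from Corollary \ref{cor:fullexcep} through these equivalences yields a full exceptional collection of length $q(n;l)$ on $D^b(\Hilb^n(S))$.

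There is no real obstacle here, as this is a pure assembly of previously established results; the only small thing to verify is that the dg enhancement $I(S)$ used throughout Section \ref{sec:Hilb} is the one compatible with both Corollary \ref{cor:fullexcep} and Example \ref{ex:symIX}, which is indeed the case by construction.
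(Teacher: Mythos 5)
Your proposal is correct and matches the paper's (implicit) argument exactly: the paper derives this corollary from Corollary \ref{cor:fullexcep} applied to the standard enhancement $I(S)$, combined with the equivalences $H^0(\Sym^n I(S)) \cong D^b([S^{\times n}/\fS_n]) \cong D^b(\Hilb^n(S))$ from Example \ref{ex:symIX} and the derived McKay correspondence. Nothing further is needed.
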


The above corollary recovers the result of Elagin \cite[Theorem 2.3]{ela09}, 
see also \cite[Proposition 1.3]{ks15}. 
The construction generalizes to the case when an exceptional collection is not full. 
The case of Enriques surfaces is treated in \cite{ks15}. 
We consider another interesting case of fake projective planes in the next subsection.

\subsubsection{Quasi-phantoms}
Let $l>0$ be a positive integer. 
Suppose that $D^b(S)$ has the following semi-orthogonal decomposition: 
\begin{equation}\label{eq:Sphantom}
 D^b(S)=\langle
(l+2) \mbox{-copies of } D^b(\pt), \mcA
\rangle    
\end{equation}
with $\HH_*(\mcA)=0$, 
where $\HH_*(-)$ denotes the Hochschild homology. 
Such a category $\mcA$ is called a 
{\it quasi-phantom} category. 
See \cite{ao13, bgks15, bgs13, kkl17, gkms15, gs13} and their references for examples of quasi-phantom subcategories inside the derived category of a smooth projective surface. 

In this case, we have 
\[
\HH_i(D^b(S))=\begin{cases}
    \bK^{\oplus l+2} & (i=0) \\
    0 & (i \neq 0). 
\end{cases}
\]
On the other hand, by the HKR isomorphsim, we have 
\[
\HH_i(D^b(S)) \cong 
\bigoplus_{p-q=i-2} H^p(S, \Omega^q_S), 
\]
and hence the Betti numbers of $S$ are determined as follows: 
\[
b_i(S)=\begin{cases}
    1 & (i=0, 4) \\
    l & (i=2) \\
    0 & (\mbox{otherwise}). 
\end{cases}
\]

\begin{lem}\label{lem:Hilbphantom}
Let $S$ be a smooth projective surface. 
Suppose that $D^b(S)$ has a semi-orthogonal decomposition (\ref{eq:Sphantom}), 
where $\mcA$ is a quasi-phantom category. 
Then $\Sym^i(\mcA)$ is a quasi-phantom category for 
any $i \geq 1$. 
\end{lem}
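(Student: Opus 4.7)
My strategy is to apply Theorem~\ref{thm:SymSOD} to the semi-orthogonal decomposition $D^b(S) = \langle D^b(\pt)^{\oplus(l+2)}, \mcA \rangle$ and to match the resulting formula for $\HH_*(\Sym^n D^b(S))$ against a direct geometric computation of $\HH_*(D^b(\Hilb^n(S)))$. Concretely, Theorem~\ref{thm:SymSOD} produces a semi-orthogonal decomposition of $\Sym^n D^b(S)$ into the pieces $\Sym^{n-i}\!\bigl\langle D^b(\pt)^{\oplus(l+2)}\bigr\rangle \bullet \Sym^i \mcA$ for $0 \leq i \leq n$. By Corollary~\ref{cor:fullexcep}, the first factor carries a full exceptional collection of length $q(n-i;\,l+2)$, so its Hochschild homology is $\bK^{q(n-i;\,l+2)}$ concentrated in degree $0$. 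Invoking the additivity of Hochschild homology under semi-orthogonal decompositions together with the Künneth formula $\HH_*(\mcC \bullet \mcD) \cong \HH_*(\mcC) \otimes \HH_*(\mcD)$ then yields
\[
\HH_*(\Sym^n D^b(S)) \;\cong\; \bigoplus_{i=0}^{n} \HH_*(\Sym^i \mcA)^{\oplus q(n-i;\,l+2)}.
\]

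To compute the left-hand side independently, I would use the assumption $\HH_*(\mcA) = 0$ together with additivity and the HKR isomorphism to pin down the Hodge diamond of $S$ as the diagonal one with $(h^{0,0}, h^{1,1}, h^{2,2}) = (1, l, 1)$. The Hodge-theoretic version of Göttsche's formula then forces $\Hilb^n(S)$ to have only diagonal Hodge numbers, with total dimension equal to $\chi(\Hilb^n(S))$, i.e.\ the coefficient of $t^n$ in $\prod_{m \geq 1}(1-t^m)^{-(l+2)}$, which is exactly $q(n;\,l+2)$. Applying HKR to the smooth variety $\Hilb^n(S)$ and invoking the derived McKay correspondence (\ref{eq:introSymHilb}) gives
\[
\HH_*(\Sym^n D^b(S)) \;\cong\; \HH_*(D^b(\Hilb^n(S))) \;\cong\; \bK^{q(n;\,l+2)},
\]
concentrated in degree $0$.

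Comparing the two identities, the $i=0$ summand $\HH_*(\Sym^0\mcA)^{\oplus q(n;\,l+2)} = \bK^{q(n;\,l+2)}$ already accounts for the entire left-hand side, so every summand with $i \geq 1$ must vanish; a straightforward induction on $n$ then delivers $\HH_*(\Sym^i \mcA) = 0$ for all $i \geq 1$, with base case $\HH_*(\mcA) = 0$ supplied by the quasi-phantom hypothesis. The main obstacle I foresee is lining up the classical inputs at the required level of generality for dg-enhanced categories: namely, additivity of Hochschild homology under semi-orthogonal decompositions, the Künneth formula for the $\bullet$-product, and the Hodge-refined Göttsche formula. Once these are precisely invoked, the argument reduces to the dimension count above.
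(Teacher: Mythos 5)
Your proof is correct and follows essentially the same route as the paper: apply Theorem \ref{thm:SymSOD} to the decomposition (\ref{eq:Sphantom}), use additivity of Hochschild homology over the resulting semi-orthogonal decomposition, and force the vanishing of $\HH_*(\Sym^i\mcA)$ for $i \geq 1$ by showing $\dim \HH_*(D^b(\Hilb^n(S))) = q(n;\,l+2)$ via G\"ottsche's formula. The only difference is how that count is obtained: you compute the (Hodge-refined) generating function for $\Hilb^n(S)$ directly, whereas the paper compares $S$ with the blow-up $T$ of $\bP^2$ at $l-1$ points, which has the same Betti numbers as $S$ and whose Hilbert schemes carry full exceptional collections of length $q(n;\,l+2)$ by Corollary \ref{cor:fullexcep}.
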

\begin{proof}
By Theorem \ref{thm:SymSOD}, we have 
\[
D^b(\Hilb^n(S))=\left\langle
q(n-i; l+2) \mbox{-copies of } \Sym^i\mcA 
\colon i=0, \cdots, n
\right\rangle 
\]
for any positive integer $n>0$. 

Let $T$ denote the blow-up of $\bP^2$ 
at $(l-1)$ points. 
Then $D^b(T)$ has a full exceptional collection of length $l+2$, 
and we have $b_i(S)=b_i(T)$ for all $i$. 
By G{\"o}ttsche's formula \cite{got90}, 
the Betti numbers of $\Hilb^n(S)$ are determined by that of the surface $S$. 
In particular, we have $b_i(\Hilb^n(S))=b_i(\Hilb^n(T))$ 
for all $n >0$ and $i \geq 0$. 
Hence by the HKR isomorphism, we have 
\begin{align*}
q(n; l+2)=\dim \HH_*(\Hilb^n(T))&=\sum_i b_i(\Hilb^n(T)) \\
&=\sum_i b_i(\Hilb^n(S)) =\dim \HH_*(\Hilb^n(S)). 
\end{align*}
Noting that $\Sym^0\mcA=D^b(\pt)$, 
we conclude that 
$\Sym^i\mcA$ are phantom subcategories for all $i \geq 1$
\end{proof}

\subsubsection{Ruled surfaces} \label{sec:ruled}
Let $S \to C$ be a $\bP^1$-bundle over a curve $C$. 
Then we have a semi-orthogonal decomposition 
$D^b(S)=\langle D^b(C), D^b(C) \rangle$. 
Hence Theorems \ref{thm:SymSOD} and \ref{thm:PVdB} imply 
\begin{align*}
&\quad D^b(\Hilb^n(S)) \\
&=\left\langle
D^b\left([C^{\times (n-i)}/\fS_{n-i}] \times [C^{\times i}/\fS_i] \right) 
\colon i=0, \cdots n
\right\rangle \\
&=\left\langle
D^b\left( 
\prod_{a_j, b_k}\Sym^{a_j}C \times \Sym^{b_k}C
\right)
\colon
\begin{aligned}
&\sum_j ja_j=n-i,  \sum_kkb_k=i, \\
&i=0, \cdots, n, \quad a_j, b_k \in \bZ_{\geq 0}
\end{aligned}
\right\rangle. 
\end{align*}
Compare it with the motivic formula in \cite[Example 4.9]{got01}. 

Moreover, by \cite[Corollary 5.12]{tod21d}, the derived categories $D^b(\Sym^NC)$ 
for $N \geq g(C)$ 
further decompose into the derived categories of the Jacobian $J(C)$ 
and $\Sym^iC$ for $0 \leq i \leq g(C)-1$. 
We conclude that 
$D^b(\Hilb^n(S))$ has a semi-orthogonal decomposition 
whose components are derived categories of the products of 
$J(C)$ and $\Sym^iC$ for $0 \leq i \leq \min\{n, g(C)-1\}$.

\subsubsection{Blow-ups} \label{sec:blup}
Let $\hatS \to S$ be the blow-up at a point. 
We have a semi-orthogonal decomposition 
$D^b(\hatS)=\langle D^b(S), D^b(\pt) \rangle$. 
By Theorem \ref{thm:SymSOD}, we have 
\[
D^b(\Hilb^n(\hatS))=\left\langle
p(i) \mbox{-copies of } D^b(\Hilb^{n-i}(S)) \colon i=0, \cdots, n
\right\rangle. 
\]

This categorifies the blow-up formula for the Euler characteristics of the Hilbert schemes. 
Note that the same semi-orthogonal decomposition 
was obtained in the author's previous paper \cite[Theorem 1.1 (2)]{kos21b} 
via a completely different method.

\end{document}